\numberwithin{equation}{section}
\setlist[itemize]{leftmargin=4ex}
\theoremstyle{plain}
\newtheorem{thm}{Theorem}[section]
\newtheorem{prop}[thm]{Proposition}
\theoremstyle{definition}
\newtheorem{definition}[thm]{Definition}
\newcommand{\R}{\mathbb{R}}
\newcommand{\Poly}{\mathbb{P}}
\newcommand{\diam}{\mathrm{diam}}
\newcommand{\tri}{\mathcal{T}}              
\newcommand{\faces}{\mathcal{F}_\Omega} 
\newcommand{\Afaces}{\mathcal{F}}           
\newcommand{\el}{K}                         
\newcommand{\fa}{F}                         
\newcommand{\pair}[1]{\omega_{#1}}      
\newcommand{\path}{\gamma}
\newcommand{\ShapePar}{\sigma}                          
\newcommand{\FEspace}{S}                        
\newcommand{\LFEspace}[1]{\FEspace|_{#1}}       
\newcommand{\nodes}{\mathcal{N}}
\newcommand{\verts}{\mathcal{V}}
\newcommand{\lNnn}{\lVert {\hskip -0.1em} \lvert}
\newcommand{\rNnn}{\rvert {\hskip -0.1em} \rVert}
\newcommand{\tnorm}[1]{\lNnn #1 \rNnn}      
\newcommand{\norm}[1]{\left\|#1\right\|}    
\newcommand{\veps}{\varepsilon}
\newcommand{\interp}{\Pi}                   
\DeclareMathOperator{\Div}{div}
\newcommand{\vphi}{\varphi}
\begin{document}
\title[Quasi-monotonicity and Robust Localization]{Quasi-monotonicity and Robust Localization with Continuous Piecewise Polynomials}
\author[F.~Tantardini]{Francesca Tantardini}
%
\author[R.~Verf\"urth]{R\"udiger Verf\"urth}
\address[R\"udiger Verf\"urth]
 {Fakult\"at f\"ur Mathematik\\
  Ruhr-Universit\"at Bochum\\
  Uni\-ver\-si\-t\"ats\-stra{\ss}e 150\\
  44801 Bochum\\
  Germany}
\email[R\"udiger Verf\"urth]{ruediger.verfuerth@ruhr-uni-bochum.de}
\urladdr[R\"udiger Verf\"urth]{homepage.ruhr-uni-bochum.de/ruediger.verfuerth}

\begin{abstract}
We consider the energy norm arising from elliptic problems with discontinuous piecewise constant diffusion. We prove that under the quasi-monotonicity property on the diffusion coefficient, the best approximation error with continuous piecewise polynomials is equivalent to the $\ell_2$-sum of best errors on elements, in the spirit of \cite{Veeser:16} for the $H^1$-seminorm. If the quasi-monotonicity is violated, counterexamples show that a robust localization does not hold in general, neither on elements, nor on pairs of adjacent elements, nor on stars of elements sharing a common vertex.  
\end{abstract}
\maketitle 

\section{Introduction}
We consider the approximation of a function $u\in H^1(\Omega)$, $\Omega\subset\R^d$,  with continuous piecewise polynomials belonging to a finite element space $S=S^{\ell, 0}(\tri)$ with underlying triangulation $\tri$. 
We measure the error in the seminorm
\begin{equation*}
 \tnorm{v}:=\norm{a^{1/2}\nabla v},
\end{equation*}
where $\norm{\cdot}$ indicates the $L^2$-norm and $a$ is piecewise constant, uniformly positive, and $a_\el:=a|_\el\in\R$ is constant on every element of the triangulation. 
Such a seminorm arises, for example, as energy norm in the elliptic problem with non-smooth diffusion
\begin{equation}
\label{pde}
 -\Div(a\nabla u)=f\quad \textrm{in } \Omega, 
 \qquad u=0 \quad \textrm{on } \partial\Omega.
\end{equation}

Since every function in $S$ is locally a polynomial of degree $\ell$, it is immediate to see that the global best error in $S$ is bounded from below by the $\ell_2$-sum of the local best errors on the elements of the triangulation 
\begin{equation*}
\inf_{v\in S}\tnorm{u-v}^2\geq\sum_{\el\in\tri}a_\el\inf_{p_\el\in \Poly_\ell(\el)}\norm{\nabla(u-p_\el)}^2_\el.
\end{equation*}
For $a=1$ it is proven in \cite{Veeser:16} that the non-trivial converse inequality 
also holds up to a constant dependent on the polynomial degree $\ell$, the dimension $d$ and the shape parameter of the mesh. The interest in such a result  arises in particular because it allows to derive error estimates in terms of piecewise regularity and to identify simple error functionals to drive the tree approximation algorithm of Binev and DeVore \cite{Binev.DeVore:04}.

We would like to understand if and how the jumps in the coefficient $a$ influence 
the validity of 
\begin{equation}
\label{loc-intro}
\inf_{v\in S}\tnorm{u-v}^2\lesssim \sum_{\el\in\tri}a_\el\inf_{p_\el\in\Poly_\ell(\el)}\norm{\nabla(u-p_\el)}^2_\el, \quad\forall u\in H^1(\Omega).
\end{equation}
We show with an example that there exists a configuration of the coefficient $a$ on the domain $\Omega$ such that \eqref{loc-intro}
 does not hold robustly in the ratio $\alpha:=\min a/\max a$ between minimal and maximal value of $a$. This means that, contrary to the case when $a=1$, the continuity constraint prevents $S$ from fully exploiting the local approximation potential of $\Poly_\ell(\el)$.

A milder continuity constraint  in $d=2$ is provided by the non-conforming Crouzeix-Raviart space, consisting of those piecewise affine functions that are continuous only at the midpoint of the edges of $\tri$. Replacing $S$ by the Crouzeix-Raviart space and the $\tnorm{\cdot}$-norm  by its broken counterpart,  \eqref{loc-intro} holds independently of $a$ and even with an equality, see e.g.\ \cite{Zanotti:13}.  

When assuming a condition on the coefficient $a$ called quasi-monotonicity, the localization \eqref{loc-intro} holds robustly in $\alpha$ for the conforming space $S$ too. Quasi-monotonicity means that for every couple of elements that share at least a vertex, there exists a connecting path of adjacent elements such that $a$ is monotone along the path (cf. Definition \ref{D:quasi-mon} below). A typical example of violation of quasi-monotonicity is the checkerboard pattern, see also Figure \ref{F:def_qm} below. This property comes into play also in the a posteriori error analysis of the finite element Galerkin solution to \eqref{pde}, see \cite{Bernardi.Verfuerth:00}, in the validity of weighted Poincar\'e inequalities  \cite{Pechstein.Scheichl:13} and in domain decomposition methods \cite{Dryja.Sarkis.Widlund:96}. 

The question arises, if in the absence of quasi-monotonicity, it is possible to localize on bigger subdomains than elements. The above-mentioned example shows that it is not possible to robustly localize the best error on pairs of elements. To see this, note that the diffusion is always quasi-monotone on pairs of elements. Thus a robust localization of the best error on pairs of elements would entail the robust localization on elements. More strikingly, the above-mentioned example can be modified to show that even on stars of elements sharing a common vertex the localization of the best error is not robust with respect to the parameter $\alpha$. Thus, the situation is completely different to the localization of the best error in the $H^{-1}$-norm. For this norm, it is proved in \cite{TVV:-1} that a localization on elements or pairs of elements is not possible while it can be achieved on stars of elements.

This note is organized as follows. In \S \ref{S:aux_res} we set the notation, while in \S \ref{S:loc-quasi-mon} we prove the robust localization on elements under quasi-monotonicity and consider also a reaction-diffusion norm. In \S \ref{S:not-rob} we then present two examples that show that a robust localization does not hold in general, neither on elements, nor on pairs of adjacent elements, nor on stars of elements sharing a common vertex.

\section{Notation}
\label{S:aux_res}
%
%
%

\subsection{Triangulation and finite element space}
We denote by $\tri$ a  conforming simplicial mesh of a polyhedral domain 
$\Omega\subset\mathbb{R}^d$, $d\in\mathbb{N}$, by $\Afaces$ the set of 
its $(d-1)$-dimensional faces and by $\verts$ the set of its vertices. 
We assume that $\tri$ is subordinate to $a$, i.e., for every $\el\in\tri$, $a$ is constant on $\el$. 
If $\el\in\tri$ is an element and $\fa\in\Afaces$ is a 
face, we write $|\el|$ and $|\fa|$ for its $d$-dimensional Lebesgue and 
$(d-1)$-dimensional Hausdorff measure, respectively.
For every face $\fa\in \Afaces$, the set  $\pair{\fa} := \bigcup\{\el\in\tri: \partial\el\supseteq\fa \}$
is the union of elements sharing the face $\fa$. We assume that $\tri$ belongs to a family of shape-regular meshes and satisfies
\begin{equation*}
\sup_{\el\in\tri}\frac{h_\el}{\rho_\el}\leq\ShapePar<+\infty,
\end{equation*} 
where $h_\el:=\diam(\el)$ and $\rho_\el$ is the 
maximum diameter of a ball inscribed in $\el$.

The space
\begin{equation*}
 \FEspace
 :=
 \FEspace^{\ell,0}(\tri)
 =
 \{v\in C^{0}(\Omega): \;
  \forall\, \el\in\mathcal{\tri},\, v\in\mathbb{P}_\ell(\el) \}
\end{equation*}
consists of all continuous functions that are piecewise polynomial over $\tri$. 
Given a set $\omega\subset\Omega$, we indicate its restriction with
\begin{equation*}
 \LFEspace{\omega}
 :=
 \{v\in C^0(\omega):
 \exists \tilde{v}\in \FEspace, \ \tilde{v}|_{\omega}=v\}.
\end{equation*}
In particular, for any element $\el\in\tri$, it holds
$\LFEspace{\el}=\mathbb{P}_\ell(\el)$ and, for any $\fa\in\Afaces$, 
we have $\LFEspace{\pair{\fa}} = \{v\in C^{0}(\pair{\fa}): 
\forall \el\in\tri\text{ with }\el\subseteq\pair{\fa}, 
\, v\in\mathbb{P}_\ell(\el) \}$.

The set of nodes of $\FEspace$ is indicated by
\begin{equation*}
 \mathcal{N}
 :=
 \bigcup_{\el\in\tri} \mathcal{N}_\el
\quad\text{with}\quad
 \mathcal{N}_\el
 :=
 \left\{ \sum_{i=0}^d \frac{\alpha_i}\ell a_i : 
  \alpha\in\mathbb{N}_0^{d+1}, \sum_{i=0}^d \alpha_i = \ell
 \right\}
\end{equation*}
whenever $a_0,\dots,a_d$ are the vertices of a simplex $\el$. In accordance 
with the definition of $\mathcal{N}_\el$, a subscript $\el$, 
$\Omega$, etc.\ to $\mathcal{N}$, $\verts$, or $\Afaces$ indicates that only those nodes, vertices
or faces which are contained in the index set are considered.


Moreover, we denote by $\{\phi_z\}_{z\in\mathcal{N}}$ the nodal basis of $\FEspace$, for which each 
$\phi_z$ is uniquely determined by
\begin{equation*}
 \phi_z\in \FEspace
 \qquad\text{and}\qquad
 \forall y\in\mathcal{N} \quad \phi_{z}(y)=\delta_{yz}.
\end{equation*}
We indicate with $\omega_z:=\mathrm{supp} (\phi_z)$ the support of $\phi_z$. 
Given an element $\el\in\tri$, the $L^2(\el)$-dual basis functions 
$\{\psi_z^\el\}_{z\in\mathcal{N}_\el}$ are such that, for every 
$z\in\mathcal{N}_\el$, it holds
\begin{equation*}
 \psi_z^\el \in \mathbb{P}_\ell(\el)
\qquad\text{and}\qquad
 \forall y\in\mathcal{N}_\el \quad
  \int_\el\psi_z^\el\phi_y=\delta_{zy}.
\end{equation*}
We thus have, for every $p\in\mathbb{P}_\ell(\el)$ and for every 
$z\in\mathcal{N}_\el$,
\begin{equation}
\label{psi_prop}
 p(z)=\int_\el p\psi_z^\el.
\end{equation}

\subsection{Scaling properties} 
Here and in what follows we write $A\lesssim B$ and $A\gtrsim B$ with the meaning that there exist constants 
$C$, $c>0$, such that $A\leq C B$ and $A\geq c B$ respectively, where $C$, $c$ may depend on $\ShapePar$, $\ell$ and $d$,
 but are 
independent of other parameters like $h_\el$ and in particular of $\alpha=\min a/\max a$. We write also $A\approx B$ when both
$A\gtrsim B$ and $A\lesssim B$ hold. 

In order to bound the norms of basis and dual basis functions, the standard procedure is to transform to the reference element $\widehat{\el}:=\mathrm{conv hull}\{{\bf{0}}, e_1,\ldots, e_d\}$, see \cite[p.\ 117-120]{Ciarlet:78}.  
For the $L^2$-norm  and the $H^1$-seminorm we obtain
\begin{equation}
\label{scaling-L2}
\norm{\phi_z}_{\el}\approx |\el|^{1/2}, \qquad \norm{\nabla\phi_z}_{\el}\lesssim h_\el^{-1}|\el|^{1/2}, \qquad z\in\nodes_\el.
\end{equation}
Concerning the dual basis functions, we observe that on the reference element, the dual basis of $\{\widehat{\phi}_{\widehat{z}}\}_{\widehat{z}\in\nodes_{\widehat{\el}}}:=\{\phi_z\circ G_\el\}_{z\in\nodes_\el}$ are given by $\widehat{\psi}_{\widehat{z}}=\det(\mathbf{G}_\el)\cdot\psi_z^\el\circ G_\el$, where $\mathbf{G}_\el$ is the non singular matrix associated with one of the affine transformations $G_\el:\R^d\to\R^d$ such that $G_\el(\widehat{\el})=\el$. We therefore have 
\begin{equation}
\label{scale-dual}
\norm{\psi_z^\el}_\el\approx |\el|^{-1/2}.
\end{equation}

\subsection{Quasi-monotonicity}
We recall the definition of the quasi-monotonicity \linebreak[4]property, which plays a fundamental role for the robustness of the localization. 

Given $z\in\nodes$ and $\el$, $\widetilde{\el}\subset\omega_z$ a monotone path 
$\path(\el,\widetilde{\el})$ between $\el$ and $\widetilde{\el}$ is a collection of adjacent elements 
$\el_0,\ldots, \el_m\subset\omega_z$ such that
\begin{itemize}
\item $\el_0:=\el$,  and $\el_m:=\widetilde{\el}$ 
\item $\el_n\cap \el_{n+1}=F_n\in\faces$, $n=0,\ldots, m-1$
\item $a_{\el_n}\leq a_{\el_{n+1}}$, $n=0,\ldots, m-1$
\end{itemize}
that is, the function $a$ is monotone along the path. 
\begin{definition}[Quasi-monotonicity]
\label{D:quasi-mon}
A piecewise constant function $a$ satisfies the quasi-monotonicity property if, for every $z\in\nodes$ and $\el$, 
$\widetilde{\el}\subset\omega_z$, such that $a|_\el\leq a|_{\widetilde{\el}}$ there exists a monotone path $\path(\el,\widetilde{\el})$. 
\end{definition}
Examples where the quasi-monotonicity property is satisfied or not-satisfied are depicted in Figure \ref{F:def_qm}. 
\newrgbcolor{zzttqq}{0.8 0.8 0.8}
\newrgbcolor{ztq}{0.6 0.6 0.6}
\newrgbcolor{zztq}{0.7 0.7 0.7}
 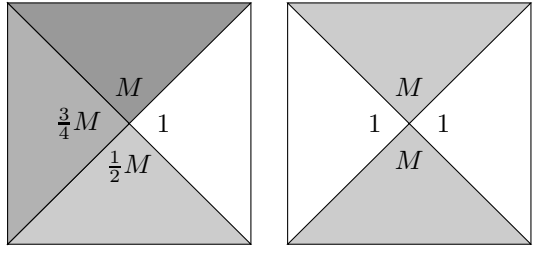
\begin{figure}[htb]
  \begin{center}
\begin{tikzpicture}[scale=0.8](-3.1,-3.1)(3.1,3.1)
 \draw[fill=zzttqq]  (0,0)--(2,-2)--(-2,-2)--(0,0);
\draw [fill=ztq] (0,0)--(2,2)--(-2,2)--(0,0);
 \draw[fill=zztq] (0,0)--(-2,-2)--(-2,2)--(0,0);
  \draw (2,-2)--(2,2);
\node [above] at (0,0.3) {$M$};
\node [below] at (0,-0.3){$\frac{1}2M$};
\node [left] at(-0.3,0){$\frac{3}4M$};
\node[right] at (0.3,0){$1$};
\end{tikzpicture}
\quad
\begin{tikzpicture}[scale=0.8](-3.1,-3.1)(3.1,3.1)
 \draw[fill=zzttqq]  (0,0)--(2,-2)--(-2,-2)--(0,0)--(2,2)--(-2,2)--(0,0);
 \draw (-2,-2)--(-2,2);
  \draw (2,-2)--(2,2);
\node [above] at (0,0.3) {$M$};
\node [below] at (0,-0.3){$M$};
\node [left] at(-0.3,0){$1$};
\node[right] at (0.3,0){$1$};
\end{tikzpicture}
\caption{The quasi-monotonicity property is satisfied (left; $M\geq2$) or not (right; $M\ne1$)}\label{F:def_qm}
 \end{center}
 \end{figure}

\subsection{Trace and Poincar\'e inequalities}
We recall some useful inequalities. For every $v\in H^1(\el)$ with $\int_\el v=0$ the trace inequality
\begin{equation}
\label{trace-ineq}
\norm{v}_\fa\lesssim h_\el\frac{|F|^{1/2}}{|\el|^{1/2}}\norm{\nabla v}, 
\end{equation}
holds for every $\fa\in\Afaces_{\partial\el}$. Moreover, for every $\vphi\in H^1(\omega)$ the Poincar\'e inequality
\begin{equation}
 \label{poinc}
\norm{\vphi-\frac{1}{|\omega|}\int_\omega \vphi}_\omega\lesssim \diam(\omega)\norm{\nabla\vphi}_\omega
\end{equation}
holds. 
The hidden constant in \eqref{poinc} has been explicitly bounded in \cite{Veeser.Verfuerth:12} for a finite element star $\omega=\omega_z$ with $z\in\verts$. 

\section{Robust localizations under quasi-monotonicity}
\label{S:loc-quasi-mon}

\subsection{Pure diffusion norm}\label{S:rob-loc}
In this section, we show that it is possible to robustly localize on elements, if the quasi-monotonicity property \ref{D:quasi-mon} is satisfied.  The proof is an adaptation of the one in \cite{Veeser:16}.

We start by defining a suitable quasi-interpolation operator $\Pi: H^1(\Omega)\to S^{\ell,0}(\tri)$ 
\begin{equation*}
\Pi u=\sum_{z\in \nodes}\Pi u(z)\phi_z,
\end{equation*}  
where the definition of the value of $\Pi u$ in a node is different if $z$ is an interior node or a 
node that couples elements. We indicate with $P_\el\in \Poly_\ell(\el)$ the best approximation to $u$ 
in the $\norm{a^{1/2}\nabla\cdot}$-norm such that 
\begin{equation}
 \label{mean-val}
\int_\el P_\el=\int_\el u
\end{equation}
and set
\begin{equation*}
\Pi u(z):=
  \left\{ 
    \begin{aligned}
    &P_\el(z) &&\text{if }z\in\nodes_{\stackrel{\circ}{\el}}
    \\
    &\int_{F_z} u\psi^{F_z}_z &&\text{if }z\in\nodes\cap\Sigma             
    \end{aligned}
 \right.
\end{equation*}
where $\Sigma:=\cup_{\el\in\tri}\partial\el$ and $F_z$ is a face containing $z$. For every $z\in\nodes$ we choose 
one of the elements $\el_{\max}(z)$ in $\omega_z$ such that
\begin{equation}
\label{Tmax}
 \forall\ \el\subset \omega_z\qquad a|_{\el_{\max}(z)}\geq a|_{\el}.
\end{equation}
In order to get robustness, we take the face $F_z$ as one of the faces of $\el_{\max}(z)$. 
\begin{prop}
\label{P:prop-interp}
Assume the coefficient $a$ satisfies the quasi-monotonicity property \ref{D:quasi-mon}. Then, for every $\el\in\tri$,  there holds
\begin{equation}
\label{prop-interp}
 \tnorm{u-\Pi u}^2_\el\lesssim 
\sum_{\el'\subset \omega_\el} \inf_{P\in \Poly_\ell(\el')}\tnorm{u-P}_{\el'}^2,
\end{equation}
where $\omega_\el:=\cup\{T\in\tri, \el\cap T\neq\emptyset\}$ is the union of elements sharing at least a vertex with $\el$.
\end{prop}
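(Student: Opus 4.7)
My plan is to reduce the proof to a node-by-node comparison between $P_\el$ and $\Pi u$ on $\el$, and then to exploit quasi-monotonicity via monotone paths to absorb the weight $a_\el$ into the weights on neighboring elements. Starting from the triangle inequality,
\[
 \tnorm{u-\Pi u}_\el
 \leq
 \tnorm{u - P_\el}_\el + \tnorm{P_\el - \Pi u}_\el ,
\]
the first summand equals $\inf_{P\in\Poly_\ell(\el)}\tnorm{u-P}_\el$, since the mean-value normalization \eqref{mean-val} only fixes the irrelevant additive constant in the $\tnorm{\cdot}$-seminorm. For the second summand, $P_\el - \Pi u$ lies in $\Poly_\ell(\el)$, so expanding in the local nodal basis and using the scaling \eqref{scaling-L2} gives
\[
 \tnorm{P_\el - \Pi u}_\el
 \lesssim
 a_\el^{1/2}\, h_\el^{-1}\, |\el|^{1/2}
 \sum_{z\in\nodes_\el}
 |P_\el(z) - \Pi u(z)| .
\]
Interior nodes contribute zero by definition of $\Pi$, so only nodes $z\in\nodes_\el\cap\Sigma$ remain.

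For each such node $z$, I would pass through the intermediate value $P_{\el_{\max}(z)}(z)$ and split
\[
 |P_\el(z) - \Pi u(z)|
 \leq
 |P_\el(z) - P_{\el_{\max}(z)}(z)|
 +
 |P_{\el_{\max}(z)}(z) - \Pi u(z)| .
\]
For the second piece, writing $\el_{\max}=\el_{\max}(z)$ and $F=F_z$, the dual-basis identity (the face analogue of \eqref{psi_prop}) gives $P_{\el_{\max}}(z)=\int_F P_{\el_{\max}}\,\psi^F_z$, so that
\[
 |P_{\el_{\max}}(z)-\Pi u(z)|
 =
 \left| \int_F (P_{\el_{\max}}-u)\,\psi^F_z \right|
 \leq
 \|u-P_{\el_{\max}}\|_F\,\|\psi^F_z\|_F .
\]
Now Cauchy–Schwarz, the face dual-basis scaling $\|\psi^F_z\|_F\approx|F|^{-1/2}$, and the trace inequality \eqref{trace-ineq} (applicable because $\int_{\el_{\max}}(u-P_{\el_{\max}})=0$), combined with shape regularity ($h_\el\approx h_{\el_{\max}}$, $|\el|\approx|\el_{\max}|$) and $a_\el\leq a_{\el_{\max}}$, produce
\[
 a_\el^{1/2}\, h_\el^{-1}\, |\el|^{1/2}
 |P_{\el_{\max}}(z)-\Pi u(z)|
 \lesssim
 \tnorm{u-P_{\el_{\max}}}_{\el_{\max}} .
\]

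The first piece is where quasi-monotonicity enters decisively. I would select a monotone path $\path(\el,\el_{\max})=(\el_0,\ldots,\el_m)$ in $\omega_z$, telescope
\[
 P_\el(z)-P_{\el_{\max}}(z)
 =
 \sum_{n=0}^{m-1}\bigl(P_{\el_n}(z)-P_{\el_{n+1}}(z)\bigr) ,
\]
and bound each summand by using the shared face $F_n=\el_n\cap\el_{n+1}$, which contains the vertex $z$. The face dual-basis identity yields $P_{\el_n}(z)-P_{\el_{n+1}}(z)=\int_{F_n}(P_{\el_n}-P_{\el_{n+1}})\,\psi^{F_n}_z$; inserting $\pm u$, applying Cauchy–Schwarz, $\|\psi^{F_n}_z\|_{F_n}\approx|F_n|^{-1/2}$, and the trace inequality on both $\el_n$ and $\el_{n+1}$ produces a sum of terms of the form $h_{\el_n}|\el_n|^{-1/2}\|\nabla(u-P_{\el_n})\|_{\el_n}$. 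Multiplying by $a_\el^{1/2}\,h_\el^{-1}|\el|^{1/2}$ and using monotonicity $a_\el\leq a_{\el_0}\leq\cdots\leq a_{\el_m}$ along the path, each geometric factor cancels up to shape-regular constants and the weight $a_\el^{1/2}$ is absorbed into $a_{\el_n}^{1/2}$, yielding $\tnorm{u-P_{\el_n}}_{\el_n}$ on each intermediate element. Summing over the (uniformly bounded number of) path elements and over $z\in\nodes_\el\cap\Sigma$ gives \eqref{prop-interp}, since all elements involved are contained in $\omega_\el$.

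The main obstacle is precisely the absorption step in the monotone-path estimate: without the ordering $a_\el\leq a_{\el_n}$ provided by Definition \ref{D:quasi-mon}, comparing $a_\el^{1/2}$ with $a_{\el_n}^{1/2}$ would introduce factors of $\alpha^{-1/2}$ and destroy robustness. Everything else is bookkeeping under shape regularity, which guarantees uniformly bounded path lengths and comparable mesh sizes within $\omega_\el$.
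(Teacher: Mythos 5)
Your proposal is correct and takes essentially the same route as the paper's proof: triangle inequality with the mean-value-normalized best approximation $P_\el$, nodal expansion of $\Pi u - P_\el$ (interior nodes vanishing), telescoping along the monotone path to $\el_{\max}(z)$ via face dual-basis identities, the trace inequality enabled by \eqref{mean-val}, and quasi-monotonicity plus shape regularity to absorb the coefficient ratio and geometric factors. The only differences are organizational bookkeeping (you factor out $a_\el^{1/2}h_\el^{-1}|\el|^{1/2}$ up front and treat the $\el_{\max}$-face term and the path terms separately, tracking the weights as $a_\el^{1/2}/a_{\el_n}^{1/2}$, which is indeed the correct exponent).
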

\begin{proof}
We insert $P_\el$ in $\tnorm{u-\Pi u}_\el$ and after a triangle inequality we are left to bound the norm of the difference between $\Pi u$ and the local best approximation $P_\el$. Since
$\Pi u-P_\el\in\Poly_\ell(\el)$ we have 
\begin{equation*}
\tnorm{\Pi u-P_\el}_\el\leq\sum_{z\in\nodes_{\partial \el}}|(\Pi u-P_\el) (z)|\tnorm{\phi_z}_\el,
\end{equation*}
where we further need to bound $(\Pi u-P_\el) (z)$. 
We add and subtract the value of the best approximations with respect to the elements in the monotone path  
$\path(\el,\el_{\max}(z))=\{\el_n\}_{n=0}^m$. We set $\fa_n:=\el_n\cap\el_{n-1}$ and for simplicity we write $P_n$ for $P_{K_n}$ and $K_{\max}$ for $K_{\max}(z)$. 
Thanks to \eqref{mean-val} we can use the Poincar\'e inequality on $K_n$. Together with \eqref{psi_prop},
\eqref{scale-dual} and the trace inequality \eqref{trace-ineq}, we get
\begin{equation*}
\begin{aligned}
&\int_{F_z} u \psi^{F_z}_z - P_\el(z)
=\int_{F_z} u \psi^{F_z}_z - P_{\el_{\max}}(z)
+\sum_{n=1}^m \left( P_n(z)-P_{n-1}(z) \right)
\\
&\qquad= \int_{F_z}(u-P_{\el_{\max}})\psi_z^{F_z}+\sum_{n=1}^m\int_{F_n}(P_n-P_{n-1})\psi_z^{F_n}
\\
&\qquad\leq\norm{u-P_{\el_{\max}}}_{F_z}\norm{\psi_z^{F_z}}_{F_z}
+\sum_{n=1}^m (\norm{u-P_n}_{F_n}
+\norm{u-P_{n-1}}_{F_n})\norm{\psi_z^{F_n}}_{F_n}
\\
&\qquad\lesssim\sum_{n=1}^m \norm{\nabla(u-P_n)}_{\el_n}\frac{h_{\el_n}}{|\el_n|^{1/2}}
\lesssim\sum_{n=1}^m \frac{1}{a_{\el_n}}\tnorm{u-P_n}_{\el_n} \frac{h_{\el_n}}{|\el_n|^{1/2}}.
\end{aligned}
\end{equation*}
Summing over $z$ and recalling \eqref{scaling-L2} as well as the crucial inequality, which holds thanks to the quasi-monotonicity of $a$,
\begin{equation*}
a_\el\leq a_{\el_n}\qquad \forall \el_n\in \path(\el,\el_{\max}(z)), \quad z\in\nodes_{\partial\el},
\end{equation*}
we arrive at 
\begin{align*}
 \tnorm{\Pi u-P_\el}_\el
&\lesssim \sum_{z\in\nodes_{\partial \el}}
\sum_{\el'\in\path(\el,\el_{\max}(z))}\frac{a_\el}{a_{\el'}}\tnorm{u-P_{\el'}}_{\el'} 
\frac{|\el|^{1/2}}{|\el'|^{1/2}}
\frac{h_{\el'}}{h_\el}
\\ &\lesssim  \sum_{\el'\subset\omega_\el}\tnorm{u-P_{\el'}}_{\el'}.\qedhere
\end{align*}
\end{proof}
Since 
\begin{equation*}
\inf_{v\in S^{\ell,-1}(\tri)}\tnorm{u-v}^2\leq \tnorm{u-\Pi u}^2=\sum_{\el\in\tri}\tnorm{u-\Pi u}^2_\el
\end{equation*}
and the number of elements in a patch $\omega_\el$ is bounded in terms of 
$\ShapePar$, from Proposition \ref{P:prop-interp} we derive the following robust localization.
\begin{prop}
\label{P:rob-loc-el}
Assume the coefficient $a$ satisfies the quasi-monotonicity property \ref{D:quasi-mon}. Then, for every $u\in H^1(\Omega)$ there holds
\begin{equation}
 \label{loc-res-grad}
\inf_{v\in S^{\ell,0}(\tri)}\norm{a^{1/2}\nabla(u-v)}^2\lesssim 
\sum_{\el\in\tri}\inf_{P\in \Poly^{\ell}(\el)}a_\el\norm{\nabla(u-P)}^2_\el.
\end{equation}
where the hidden constant is independent of $\alpha=\min a/\max a$.
\end{prop}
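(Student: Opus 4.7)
The plan is to use the quasi-interpolant $\interp u \in \FEspace^{\ell,0}(\tri)$ constructed above as the competitor in the infimum on the left-hand side. Since $\interp u$ is admissible and $\tnorm{\cdot}^2$ decomposes as a sum over elements, we have immediately
\begin{equation*}
\inf_{v\in \FEspace^{\ell,0}(\tri)}\tnorm{u-v}^2
\leq \tnorm{u-\interp u}^2
=\sum_{\el\in\tri}\tnorm{u-\interp u}^2_\el.
\end{equation*}

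Next, I would apply Proposition \ref{P:prop-interp} termwise to each summand, obtaining
\begin{equation*}
\sum_{\el\in\tri}\tnorm{u-\interp u}^2_\el
\lesssim \sum_{\el\in\tri}\sum_{\el'\subset\omega_\el}\inf_{P\in\Poly_\ell(\el')}\tnorm{u-P}^2_{\el'}.
\end{equation*}
The remaining step is to swap the order of summation and absorb the inner sum. By shape regularity, each element $\el'$ belongs to $\omega_\el$ only for those $\el$ that share at least a vertex with $\el'$, and the number of such $\el$ is bounded by a constant depending only on $\ShapePar$ and $d$. This finite-overlap property yields
\begin{equation*}
\sum_{\el\in\tri}\sum_{\el'\subset\omega_\el}\inf_{P\in\Poly_\ell(\el')}\tnorm{u-P}^2_{\el'}
\lesssim \sum_{\el'\in\tri}\inf_{P\in\Poly_\ell(\el')}\tnorm{u-P}^2_{\el'},
\end{equation*}
which is exactly the claimed bound after rewriting $\tnorm{\cdot}^2_{\el'} = a_{\el'}\norm{\nabla\cdot}^2_{\el'}$.

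I do not expect any real obstacle here: the substantive analytic work, including the use of quasi-monotonicity to control the factors $a_\el/a_{\el'}$ uniformly in $\alpha$, has already been carried out in Proposition \ref{P:prop-interp}. The present statement is essentially a bookkeeping consequence, and the only quantitative input beyond Proposition \ref{P:prop-interp} is the shape-regularity-dependent bound on the overlap of the patches $\{\omega_\el\}_{\el\in\tri}$.
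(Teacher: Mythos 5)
Your proof is correct and follows essentially the same route as the paper: take the quasi-interpolant of Proposition \ref{P:prop-interp} as competitor, decompose the squared norm over elements, apply \eqref{prop-interp} termwise, and conclude by the finite overlap of the patches $\omega_\el$, which is bounded in terms of $\ShapePar$. The only point to make explicit is that the operator you insert must be the one of \S\ref{S:rob-loc} for which Proposition \ref{P:prop-interp} is proved (nodal values $P_\el(z)$ at interior nodes and face moments on the skeleton), and not the operator \eqref{def-interp-op} used later for the reaction--diffusion norm, even though both are denoted by the same symbol.
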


\subsection{Reaction-Diffusion norm}

The robust localization on elements in Proposition \ref{P:rob-loc-el} can be combined with the robust localization for the reaction-diffusion norm in \cite{TVV:15} to show a robust localization for the norm
\begin{equation*}
\left(\norm{a^{1/2}\nabla\cdot}^2+\beta\norm{\cdot}^2\right)^{1/2}, \qquad \beta>0.
\end{equation*}
 
We can first separate the error related to the function itself from the error related to the gradient. More precisely, we aim at
\begin{equation}
\begin{aligned}
&\inf_{v\in S^{\ell,0}(\tri)} \left( \norm{a^{1/2}\nabla(u-v)}^2+\beta\norm{u-v}^2 \right) \\
&\qquad\approx 
\inf_{v\in S^{\ell,0}(\tri)}\norm{a^{1/2}\nabla(u-v)}^2+\inf_{w\in S^{\ell,0}(\tri)}\beta\norm{u-w}^2.
\end{aligned}
\label{dec-norms}
\end{equation}
Taking the same function in the infima on the right-hand side, it is trivial to see that the right-hand side is a lower bound for the left-hand one. 
Assuming quasi-monotonicity, the converse inequality is also true up to a constant independent 
of $\alpha$ and $\beta$. To see this, consider the interpolation operator 
$\interp:L^2(\Omega)\to S^{\ell,0}(\tri)$ defined as
\begin{equation}
 \label{def-interp-op}
\interp u:=\sum_{z\in \nodes}\left(\int_{\el_{\max}(z)}u\psi_z^{\el_{\max}(z)}\right)\phi_z,
\end{equation}
where $\el_{\max}$ is defined as in \eqref{Tmax}.
\begin{prop}[Properties of $\interp$]
\label{P:prop-interp-rd}
 The interpolation operator defined in \eqref{def-interp-op} is invariant on $S^{\ell,0}(\tri)$ and $L^2$-stable
 \begin{equation*}
\interp s=s \quad\forall s\in S^{\ell,0}(\tri), \qquad
 \norm{\interp u}\lesssim \norm{u} \quad\forall u \in L^2(\Omega).
\end{equation*}
If, in addition, the quasi-monotonicity property \ref{D:quasi-mon} holds, the operator is energy-stable too
\begin{equation*}
\norm{a^{1/2}\nabla\interp u} \lesssim \norm{a^{1/2}\nabla u} .
\end{equation*}
All hidden constants are independent of $\alpha$ and $\beta$.
\end{prop}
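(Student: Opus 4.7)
The plan is to establish the three properties separately; the first two follow quickly from the dual basis construction, while the energy stability is the real content and will be obtained by adapting the monotone path argument of Proposition \ref{P:prop-interp}. A useful preliminary observation is that applying \eqref{psi_prop} to the polynomial $1 = \sum_{y\in\nodes_K}\phi_y$ gives $\int_K \psi_z^K = 1$ for every $z\in\nodes_K$, so the weighted averages defining $\Pi u(z)$ reproduce constants.

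For invariance, given $s \in S^{\ell,0}(\tri)$ and $z\in\nodes$, the restriction $s|_{K_{\max}(z)}$ lies in $\Poly_\ell(K_{\max}(z))$, whence \eqref{psi_prop} yields $\int_{K_{\max}(z)} s\,\psi_z^{K_{\max}(z)} = s(z)$ and $\Pi s = \sum_z s(z)\phi_z = s$. For $L^2$-stability, on each $K\in\tri$ I expand $\Pi u|_K$ in the nodal basis and apply Cauchy--Schwarz together with \eqref{scaling-L2} and \eqref{scale-dual} to obtain
\begin{equation*}
 \|\Pi u\|_K \lesssim \sum_{z\in\nodes_K} \frac{|K|^{1/2}}{|K_{\max}(z)|^{1/2}}\,\|u\|_{K_{\max}(z)} \lesssim \sum_{z\in\nodes_K}\|u\|_{K_{\max}(z)},
\end{equation*}
the last step by shape regularity. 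Squaring, summing over $K$ and observing that each element serves as $K_{\max}(z)$ for only a bounded number of nodes closes the estimate; neither step uses quasi-monotonicity or depends on $\beta$.

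Energy stability is the main obstacle. I plan to prove the stronger estimate $\tnorm{u-\Pi u}\lesssim\tnorm{u}$ and conclude by the triangle inequality. Locally on $K$, let $P_K\in\Poly_\ell(K)$ be the best approximation in $\tnorm{\cdot}_K$ normalised by $\int_K P_K = \int_K u$, and split $\tnorm{u-\Pi u}_K \leq \tnorm{u-P_K}_K + \tnorm{P_K-\Pi u}_K$. Since $(P_K-\Pi u)|_K \in \Poly_\ell(K)$, I expand it in the nodal basis and reduce to bounding $|\Pi u(z) - P_K(z)|$ for each $z\in\nodes_K$. For $z$ interior to $K$ one has $K_{\max}(z)=K$, so $\Pi u(z)-P_K(z) = \int_K(u-P_K)\psi_z^K$, which is controlled by \eqref{scale-dual} and the Poincar\'e inequality \eqref{poinc} (applicable thanks to $\int_K(u-P_K)=0$). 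For $z$ on $\partial K$, quasi-monotonicity supplies a monotone path $\gamma(K,K_{\max}(z)) = \{K_0,\ldots,K_m\}\subset\omega_z$ from Definition \ref{D:quasi-mon} along which I telescope as in Proposition \ref{P:prop-interp}:
\begin{equation*}
\Pi u(z) - P_K(z) = \int_{K_m}(u-P_{K_m})\psi_z^{K_m} + \sum_{n=1}^m \bigl(P_{K_n}(z) - P_{K_{n-1}}(z)\bigr),
\end{equation*}
where each face difference $P_{K_n}(z) - P_{K_{n-1}}(z) = \int_{F_n}(P_{K_n}-P_{K_{n-1}})\psi_z^{F_n}$ at $F_n=K_n\cap K_{n-1}$ is handled by \eqref{trace-ineq}, \eqref{scale-dual} and \eqref{poinc} exactly as in the proof of Proposition \ref{P:prop-interp}. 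The monotonicity $a_K\leq a_{K_n}$ along the path --- the one place where robustness in $\alpha$ could fail --- absorbs the weight ratios after multiplication by $\tnorm{\phi_z}_K$. Collecting all nodal contributions yields
\begin{equation*}
\tnorm{u-\Pi u}_K^2 \lesssim \sum_{K'\subset\omega_K}\inf_{P\in\Poly_\ell(K')}\tnorm{u-P}_{K'}^2 \leq \sum_{K'\subset\omega_K}\tnorm{u}_{K'}^2,
\end{equation*}
the last bound by taking $P=0$. Summing over $K$ with bounded patch overlap and applying the triangle inequality concludes $\tnorm{\Pi u}\lesssim\tnorm{u}$ with a constant independent of both $\alpha$ and $\beta$.
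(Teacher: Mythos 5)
Your proposal is correct, and the invariance and $L^2$-stability arguments coincide with the paper's (nodal expansion, \eqref{psi_prop}, \eqref{scaling-L2}--\eqref{scale-dual}, finite overlap of patches). For the energy stability, however, you take a genuinely different route. You re-run the monotone-path telescoping of Proposition \ref{P:prop-interp}, adapted to the volume functionals $\int_{\el_{\max}(z)}u\,\psi_z^{\el_{\max}(z)}$ defining \eqref{def-interp-op}: splitting off the local best approximation $P_\el$ with $\int_\el P_\el=\int_\el u$, writing $\interp u(z)-P_\el(z)=\int_{\el_{\max}(z)}(u-P_{\el_{\max}(z)})\psi_z^{\el_{\max}(z)}+\sum_n\bigl(P_{\el_n}(z)-P_{\el_{n-1}}(z)\bigr)$, and controlling the face terms by \eqref{trace-ineq}, \eqref{scale-dual} and element-wise Poincar\'e, with $a_\el\le a_{\el_n}$ along the path absorbing the weights; this yields the stronger local near-best estimate $\tnorm{u-\interp u}^2_\el\lesssim\sum_{\el'\subset\omega_\el}\inf_{P\in\Poly_\ell(\el')}\tnorm{u-P}^2_{\el'}$, from which stability follows by taking $P=0$ and a triangle inequality. (The identity you need, $z\in F_n=\el_n\cap\el_{n-1}$ for adjacent elements of $\omega_z$ in a conforming mesh, holds and is already used implicitly in Proposition \ref{P:prop-interp}, so this is sound.) The paper instead argues directly: it sets $\widehat\omega_\el=\bigcup_{z\in\nodes_\el}\gamma(\el,\el_{\max}(z))$, uses that $\interp$ reproduces constants to subtract the mean $c_\el(u)$ over this connected set, bounds $a_\el\norm{\nabla\interp(u-c_\el(u))}_\el$ by scaling, and applies a single Poincar\'e inequality on $\widehat\omega_\el$ together with $a_\el\le a_{\el'}$ for all $\el'\subset\widehat\omega_\el$. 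The paper's argument is shorter and avoids face dual functions and trace inequalities, at the price of invoking a Poincar\'e inequality on the nonconvex union $\widehat\omega_\el$ (whose constant is controlled by shape regularity); your argument is longer but stays element-wise and delivers as a by-product the energy-norm near-best approximation property of $\interp$ itself, which is in fact what the paper uses immediately after the proposition to establish \eqref{dec-norms}.
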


\begin{proof}
Invariance over $S^{\ell,0}(\tri)$ follows readily from \eqref{psi_prop}.

Concerning local stability in $L^2$, for every $\el\in\tri$, by means of \eqref{scaling-L2}--\eqref{scale-dual} we obtain
\begin{equation*}
\begin{aligned}
\norm{\interp u}_\el&\le\sum_{z\in \nodes_\el}\left|\int_{\el_{\max}(z)}u\psi_z^{\el_{\max}(z)}\right|\norm{\phi_z}_\el\\
&\lesssim \sum_{z\in\nodes_\el}\norm{u}_{\el_{\max}(z)}\frac{|\el|^{1/2}}{|\el_{\max}(z)|^{1/2}}
\lesssim\norm{u}_{\omega_\el}.
\end{aligned}
\end{equation*}
Global stability in $L^2$ follows from the finite overlapping of the
patches $\omega_\el$.

Regarding stability in $\norm{a^{1/2}\nabla\cdot}$, for every $\el\in\tri$ we set
\begin{equation*}
 \widehat{\omega}_\el:=\bigcup_{z\in \nodes_\el}\gamma(\el,\el_{\max}(z)).
\end{equation*}
An example of such a set is given in Figure \ref{F:set_poinc} for the polynomial degree $\ell=1$. 
We notice that $ \widehat{\omega}_\el$ is a connected set and define 
\begin{equation*}
c_\el(u):=\frac{1}{|\widehat{\omega}_\el|}\int_{\widehat{\omega}_\el} u.
\end{equation*}
Since $\Pi$ is locally invariant over constants, we can subtract $c_\el(u)$ and exploit 
\eqref{psi_prop}, \eqref{scaling-L2}--\eqref{scale-dual}, the Poincar\'e inequality on $\widehat{\omega}_\el$ as well as $a_\el\leq a_{\el'}$ for every $\el'\subset\widehat{\omega}_\el$, to get
\begin{equation*}
\begin{aligned}
a_\el\norm{\nabla\interp u}_\el&=a_\el\norm{\nabla(\interp u-c_\el(u))}_\el=
a_\el\norm{\nabla\interp (u-c_\el(u))}_\el
\\
&\leq a_\el\sum_{z\in\nodes_\el}\norm{u-c_\el(u)}_{\el_{\max}(z)}\norm{\psi^{\el_{\max}}_z}_{\el_{\max}}\norm{\nabla\phi_z}_\el
\\
&\lesssim  a_\el h_\el^{-1}\norm{u-c_\el(u)}_{\widehat{\omega}_\el}
\lesssim a_\el\norm{\nabla u}_{\widehat{\omega}_\el}\lesssim\norm{a^{1/2}\nabla u}_{\omega_\el}.
\end{aligned}
\end{equation*}
Summing over $\el$ we get global stability by the finite overlapping of the patches $\omega_\el$.
\end{proof}
\newrgbcolor{zzttqq}{0.75 0.75 0.75}
\newrgbcolor{ztq}{0.6 0.6 0.6}
\newrgbcolor{zztq}{0.85 0.85 0.85}
\begin{figure}
 \begin{center}
  \begin{tikzpicture}[scale=0.8](-4.1,-3.1)(4.1,3.1)
\draw[fill=zzttqq] (-2,1)--(-3,0)--(-1,-2)--(0,-1)--(-2,1);
\draw[fill=zztq] (-2,1)--(-1,2)--(1,2)--(2,1)--(1,0)--(-1,0)--(-2,1);
\draw[fill=ztq] (2,1)--(3,0)--(1,-2)--(1,0);
\draw (1,-2)--(0,-1)--(1,0)--(3,0);
\draw (2,-1)--(-1,2)--(-1,-2);
\draw (1,0)--(1,2)--(-2,-1);
\draw (-3,0)--(-1,0);
\node [above] at (0,0.1) {$K$};
\draw[very thick] (3,0)--(2,1)--(1,0)--(3,0);
\fill[pattern=north west lines] (3,0)--(2,1)--(1,0)--(3,0);
\draw [very thick] (-1,0)--(-2,1)--(-3,0)--(-1,0);
\fill[pattern=north west lines] (-1,0)--(-2,1)--(-3,0)--(-1,0);
\draw[very thick] (1,2)--(0,1)--(-1,2)--(1,2);
\fill[pattern=north west lines] (1,2)--(0,1)--(-1,2)--(1,2);  
  \end{tikzpicture}
\quad
  \begin{tikzpicture}[scale=0.8](-4.1,-3.1)(4.1,3.1)
\draw[fill=zzttqq] (-2,1)--(-3,0)--(-1,-2)--(0,-1)--(-2,1);
\draw[fill=zztq] (-2,1)--(-1,2)--(1,2)--(2,1)--(1,0)--(-1,0)--(-2,1);
\draw[fill=ztq] (2,1)--(3,0)--(1,-2)--(1,0);
\draw (1,-2)--(0,-1)--(1,0)--(3,0);
\draw (2,-1)--(-1,2)--(-1,-2);
\draw (1,0)--(1,2)--(-2,-1);
\draw (-3,0)--(-1,0);
\draw[very thick]   (-3,0)--(-1,2)--(1,2)--(3,0)--(-3,0);
\fill[pattern=north west lines] (-3,0)--(-1,2)--(1,2)--(3,0)--(-3,0);
  \end{tikzpicture}
\caption{Example of $\omega_\el$ and $\widehat{\omega}_K$ for $\ell=1$. The different values of $a$ are indicated with different colors following the rule: The darker the region, the bigger the value of $a$. On the left: Element $K$ and elements $K_{\max}(z)$ (hatched), corresponding to the vertices $z$ of $\el$.  
On the right: $\widehat{\omega}_K$ (hatched). }\label{F:set_poinc}
 \end{center}
\end{figure}
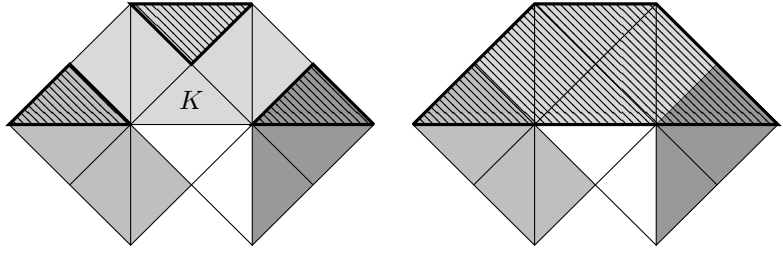

Note that in the proof of the $L^2$-stability neither the choice of $\el_{\max}(z)$ nor the quasi-monotonicity
 property  play a role. Instead, they are crucial in 
the proof of the stability in the $\norm{a^{1/2}\nabla\cdot}$-norm.

 From Proposition \ref{P:prop-interp-rd} it follows immediately that, for every $u\in H^1(\Omega)$,  $\Pi u$ is a near-best approximation in $S^{\ell,0}(\tri)$ in both the $L^2$ and in the $\norm{a^{1/2}\nabla\cdot}$-norm. Hence, we can prove \eqref{dec-norms}: 
\begin{equation*}
\begin{aligned}
&\inf_{v\in S^{\ell,0}(\tri)}\norm{a^{1/2}\nabla(u-v)}^2+\inf_{w\in S^{\ell,0}(\tri)}\beta\norm{u-w}^2
\\
&\qquad\qquad\leq\norm{a^{1/2}\nabla(u-\interp u)}^2+\beta\norm{u-\interp u}^2
\\
&\qquad\qquad\lesssim 
\inf_{v\in S^{\ell,0}(\tri)}\norm{a^{1/2}\nabla(u-v)}^2
  +\inf_{w\in S^{\ell,0}(\tri)}\beta\norm{u-w}^2.
\end{aligned}
\end{equation*}
We recall from \cite[Thm 5.1]{TVV:15} a localization result for the $L^2$-norm on pairs of elements:
\begin{equation}
 \label{loc-L2}
 \inf_{v\in \FEspace}\norm{u-v}
 \lesssim \left(
  \sum_{\fa\in\faces}
  \inf_{V\in\LFEspace{\pair{\fa}}} \norm{u-V}^2_{\pair{\fa}}
 \right)^{\frac12}.
\end{equation}

Combining \eqref{dec-norms} with \eqref{loc-L2} and \eqref{loc-res-grad} we derive the following result.
\begin{prop}
Assume the coefficient $a$ satisfies the quasi-monotonicity property \ref{D:quasi-mon}. Then, for every $u\in H^1(\Omega)$ there holds
\begin{equation*}
\begin{aligned}
&\inf_{v\in S^{\ell,0}(\tri)}\norm{a^{1/2}\nabla(u-v)}^2+\inf_{w\in S^{\ell,0}(\tri)}\beta\norm{u-w}^2
\\&\qquad\approx
\sum_{\fa\in\faces}\inf_{Q\in S|_{\omega_\fa}}\beta\norm{u-Q}^2_{\omega_\fa}
+\sum_{\el\in\tri}\inf_{P\in \Poly_\ell(\el)}\norm{a^{1/2}\nabla(u-P)}^2_{\el},
\end{aligned}
\end{equation*}
where the hidden constant is independent of $\alpha$ and $\beta$.
\end{prop}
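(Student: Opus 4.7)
The plan is to prove the two inequalities hidden in the symbol $\approx$ independently, with both following from combining the two localization results already in hand: Proposition \ref{P:rob-loc-el} for the gradient-norm best error and \eqref{loc-L2} for the $L^2$ best error. The decoupling identity \eqref{dec-norms}, although not invoked in the stated equivalence itself, is what connects this result to the combined reaction-diffusion infimum discussed in the motivating paragraph, so I would cite it only to explain why the sum-of-infima on the left-hand side is the natural quantity to localize.

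For the direction RHS $\lesssim$ LHS, I would fix arbitrary $v,w\in S^{\ell,0}(\tri)$. Since $v|_\el\in\Poly_\ell(\el)$ for every $\el\in\tri$, summing $\inf_{P}\norm{\nabla(u-P)}^2_\el\leq\norm{\nabla(u-v)}^2_\el$ against the weights $a_\el$ gives $\sum_\el a_\el\inf_P\norm{\nabla(u-P)}^2_\el\leq\norm{a^{1/2}\nabla(u-v)}^2$. Similarly, $w|_{\pair{\fa}}\in\LFEspace{\pair{\fa}}$ yields $\inf_{Q}\norm{u-Q}^2_{\pair{\fa}}\leq\norm{u-w}^2_{\pair{\fa}}$, and summing over $\fa\in\faces$ while using the $O(1)$-bounded multiplicity of each element among the face patches produces $\beta\sum_\fa\inf_Q\norm{u-Q}^2_{\pair{\fa}}\lesssim\beta\norm{u-w}^2$. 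Taking infima over $v$ and $w$ separately then adds the two bounds into the left-hand side.

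For the converse direction LHS $\lesssim$ RHS, Proposition \ref{P:rob-loc-el} controls the first infimum by $\sum_\el a_\el\inf_P\norm{\nabla(u-P)}^2_\el$ with a constant independent of $\alpha$, and squaring \eqref{loc-L2} and multiplying by $\beta$ controls the second by $\beta\sum_\fa\inf_Q\norm{u-Q}^2_{\pair{\fa}}$. Adding the two bounds yields the claim, with all hidden constants depending only on $\ShapePar$, $\ell$ and $d$, so the asserted independence from $\alpha$ and $\beta$ is inherited.

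There is no real obstacle in this last step: the genuine work — the monotone-path interpolant used to prove Proposition \ref{P:rob-loc-el} and the face-patch $L^2$-localization imported from \cite{TVV:15} — has already been done. Quasi-monotonicity enters only through Proposition \ref{P:rob-loc-el}, while the $L^2$-bound is robust for free; consequently the present argument amounts to bookkeeping plus the finite-overlap of patches.
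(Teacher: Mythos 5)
Your proposal is correct and follows essentially the same route as the paper, which obtains the result by combining the robust element-wise localization \eqref{loc-res-grad} of Proposition \ref{P:rob-loc-el} with the $L^2$-localization \eqref{loc-L2}, the converse direction being the elementary restriction-plus-finite-overlap argument you give. Your observation that \eqref{dec-norms} is not needed for the literal statement but only to relate the decoupled left-hand side to the combined reaction-diffusion best error matches the role it plays in the paper's derivation.
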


\section{Without quasi-monotonicity localization is not robust}
\label{S:not-rob}
In this section we show with two examples that, without the quasi-monotonicity property \ref{D:quasi-mon}, a robust localization is not possible neither on elements, nor on pairs of elements, nor on stars of elements sharing a common vertex. 

\subsection{Localization on elements and pairs of elements}
\label{S:single-pair}
We first consider the localization on single elements. To this end consider the domain 
\begin{equation*}
\Omega:=\{(x,y)\mid -1\leq x,y\leq 1,\ -1\leq x+y\leq 1 \}\subset\R^2
\end{equation*}
depicted in Figure \ref{counter_ex}, together with its triangulation $\el_1,\ldots,\el_6$ and the function $a$ defined by
\begin{equation*}
 a=\left\{
\begin{aligned}
& 1 \qquad && \text{if } x,y>0\ \text{or }x,y<0,\\
& \veps^2 &&  \text{otherwise,}
\end{aligned}
\right.
\end{equation*}
with $0 < \veps \ll 1$. Note that, in this configuration, the quasi-monotonicity property does not hold.
\newrgbcolor{zzttqq}{0.8 0.8 0.8}
 \begin{figure}[htb]
  \begin{center}
\begin{tikzpicture}[scale=0.8](-3.1,-3.1)(3.1,3.1)
 \draw[fill=zzttqq]  (-2,0)--(0,0)--(0,2)--(-2,2);
\draw[fill=zzttqq]   (2,0)--(0,0)--(0,-2)--(2,-2);
 \draw (0,-2)--(-2,0) --(2,0)--(0,2)--(0,-2)--(2,-2)--(2,0);
  \draw (-2,0)--(-2,2)--(0,2);
  \draw (-2,2)--(2,-2);
\draw[->, dashed] (0,-3)--(0,3);
\draw[->,dashed] (-3,0)--(3,0);
\draw[dashed] (0.8,0) arc [radius=0.8, start angle=0, end angle= 90];
\draw[dashed] (0,-0.8) arc [radius=0.8, start angle=270, end angle=180];
\node [above right] at (1,1) {$K_1$};
\node [below left] at (-1,-1){$K_4$};
\node [above] at(-1,2.1){$K_2$};
\node[left] at (-2.1,1){$K_3$};
\node[below]at (1,-2.1){$K_5$};
\node[right]at(2.1,-1){$K_6$};
\node [below] at (0.8,0) {$\veps$};
\end{tikzpicture}
\caption{Domain $\Omega$ and its subdivision in six triangles. Gray areas: $a=\veps^2$, white areas $a=1$.}\label{counter_ex}
 \end{center}
 \end{figure}
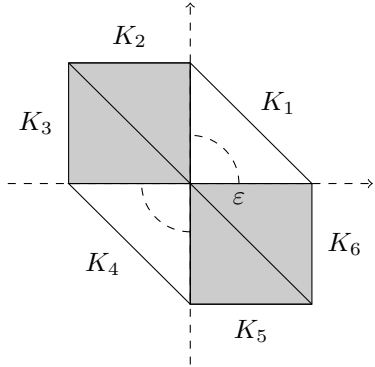

 Define $w(x,y):=1-x-y$ and
\begin{equation*}
 \rho_\veps(r):=\left\{\begin{aligned}
                       &(1-\veps)\left(\frac{r}{\veps}\right)^{\veps} && \text{if }0 \le r < \veps
			\\
			& 1-r && \text{if }\veps \le r \le 1 \\
			& 0 && \text{if } 1 < r
                      \end{aligned} 
\right.,
\end{equation*}
and observe that 
\begin{subequations}
\begin{align}
\label{grad-rho}
\norm{\nabla \rho_\veps}^2_{B(0,\veps)}&=2\pi\int_0^\veps r\left(\frac{\partial \rho_\veps}{\partial r}\right)^2
=\pi\veps(1-\veps)^2,\\[2mm]
\int_{0}^1 \frac1r\rho^2_\veps&\leq\int_{0}^\veps \frac{r^{2\veps-1}}{\veps^{2\veps}}+\int_{\veps}^1\frac{1}r=\frac1{2\veps}-\log \veps.
\label{int-rho-r-1}
\end{align}
\end{subequations}
Moreover let $v_\veps$ satisfy
\begin{equation*}
\begin{aligned}\Delta v_\veps&=0\quad\qquad &&\text{in } K_1\setminus B(0,\veps),\\
 v_\veps&=w &&\text{on }\partial(K_1\setminus B(0,\veps))\setminus \partial B(0,\veps),\\
v_\veps&=1-\veps &&\text{on }\partial B(0,\veps)\cap K_1.
\end{aligned}
\end{equation*}
We take as target function $u_\veps\in H^1_0(\Omega)$ defined as follows 
\begin{equation*}
\begin{aligned} 
u_\veps&=\left\{\begin{aligned}
                 &\rho_\veps(r)\bigg(3-\frac4\pi\theta\bigg) && \text{in } K_2\cup K_3\\[2mm]
		 &\rho_\veps(r) &&\text{in }K_1\cap B(0,\veps)\\
		 & v_\veps &&\text{in } K_1\setminus B(0,\veps)
                \end{aligned}
 \right.\\
u_\veps(x,y)&=-u_\veps(-x,-y), \qquad\forall (x,y)\in K_4\cup K_5\cup K_6.
\end{aligned}
\end{equation*}
Thanks to the symmetry of $u_\veps$, we have, for every $Q\in S^{1,0}(\tri) \cap H^1_0(\Omega)$, 
\begin{equation*}
\veps^2\int_{K_2\cup K_3\cup K_5\cup K_6} \nabla u \cdot\nabla Q+\int_{K_1\cup K_4}\nabla u\cdot\nabla Q=0,
\end{equation*}
so that the global Ritz projection $Ru_\veps$ of $u_\veps$ onto 
$S^{1, 0}(\tri) \cap H^1_0(\Omega)$ is the zero function. In order to prove that a localization result does not robustly hold, 
we show that $\tnorm{u_\veps-Ru_\veps}^2>0$ uniformly in $\veps$, while 
\begin{equation}
\label{SRitz-pairs-->0}
\sum_{j=1}^6\tnorm{u_\veps-R_ju_\veps}^2_{K_j} \lesssim \veps,
\end{equation} 
where $R_ju_\veps$ is the Ritz projection of $u_\veps$ onto the space of affine functions on the element $K_j$.
\\
To this end, we recall that $0 < \veps \ll 1$ and  first 
show that $w$ is a good approximation to $u_\veps$ on $K_1$. 
Observe that $u_\veps-w$ is harmonic in $K_1\setminus B(0,\veps)$ and 
\begin{equation*}
\begin{aligned}
u_\veps-w&=0 &&\text{on }\partial(K_1\setminus B(0,\veps))\setminus \partial B(0,\veps),\\ 
u_\veps-w&=\veps g(\theta) &&\text{on }\partial B(0,\veps)\cap K_1,
\end{aligned}
,
\end{equation*} 
where $g(\theta):=\cos\theta+\sin\theta-1$. Consider the function $\widetilde{u}\in W^{1,\infty}(K_1\setminus B(0,\veps))$ defined by
\begin{equation*}
\widetilde{u}(r,\theta):= g(\theta)\frac{1-r\cos\theta-r\sin\theta}{1-\veps\cos\theta-\veps\sin\theta}
\end{equation*} 
and such that $\veps\widetilde{u}$ equals $u_\veps-w$ on the boundary $\partial (K_1\setminus B(0,\veps))$. Since $u_\veps-w$ is harmonic, we have
\begin{equation*}
\begin{aligned}
&\norm{\nabla(u_\veps-w)}^2_{K_1\setminus B(0,\veps)}\leq
\norm{\nabla \veps\widetilde{u}}^2_{K_1\setminus B(0,\veps)}\\
&\qquad\lesssim \veps^2
\norm{\frac{\partial \widetilde{u}}{\partial r}}^2_{\infty}+\veps^2|\log \veps|\norm{\frac{\partial \widetilde{u}}{\partial \theta}}^2_{\infty}
\to 0\qquad \text{as }\veps\to 0,
\end{aligned}
\end{equation*}
where $\norm{\cdot}_\infty$ indicates the $L^\infty$-norm. 
This, together with \eqref{grad-rho}, implies that 
\begin{equation}
\begin{aligned}
 &\tnorm{u_\veps-Ru_\veps}^2_{K_1}\leq \tnorm{u_\veps-w}^2_{K_1}\\
&\qquad\leq 
\norm{\nabla(u_\veps-w)}^2_{K_1\setminus B(0,\veps)}+2\norm{\nabla u_\veps}^2_{K_1\cap B(0,\veps)}
\\
&\qquad\qquad
+2\norm{\nabla w}^2_{K_1\cap B(0,\veps)} \lesssim \veps.
\end{aligned}
\label{Ritz-pair-->0}
\end{equation}
Moreover, we also see that the zero function is not a good approximation on $K_1$. In fact
\begin{equation*}
 \norm{\nabla u_\veps}^2_{K_1}\geq\norm{\nabla u_\veps}^2_{K_1\setminus B(0,\veps)}\geq \frac12\norm{\nabla w}^2_{K_1\setminus B(0,\veps)}
-\norm{\nabla(u_\veps-w)}^2_{K_1\setminus B(0,\veps)}\gtrsim 1
\end{equation*}
uniformly in $\veps$.
In particular then 
\begin{equation}\label{norm-u_eps}
 \tnorm{u_\veps-Ru_\veps}^2_\Omega=\tnorm{u_\veps}^2_\Omega\geq 
  \norm{\nabla u_\veps}^2_{K_1} \gtrsim 1
\end{equation}
independently of $\veps$. 
On the other hand, recalling \eqref{int-rho-r-1} we derive
\begin{equation}\label{u_eps-to-0}
\begin{aligned}
&\tnorm{u_\veps}^2_{K_2\cup K_3\cup K_5\cup K_6}=2\tnorm{u_\veps}^2_{K_2\cup K_3}=
2\veps^2\norm{\nabla u_\veps}^2_{K_2\cup K_3} 
\\
&\qquad=2\veps^2\int_0^1\int_{\frac\pi2}^\pi r\left(\frac{\partial \rho_\veps}{\partial r}\right)^2\bigg(3-\frac4\pi\theta\bigg)^2
+\frac{1}{r}\left(\frac4\pi\rho_\veps\right)^2 \lesssim \veps.
\end{aligned}
\end{equation}
This, together with \eqref{Ritz-pair-->0} and its counterpart on $K_4$
leads to \eqref{SRitz-pairs-->0}.

Since on pairs of elements the diffusion is always quasi-monotone, the above example also shows that, without the quasi-monotonicity property \ref{D:quasi-mon}, the localization on pairs of elements is not robust as well.

\subsection{Localization on stars of elements}
\label{S:stars}
In this sub-section we modify the previous example to show that without quasi-monotonicity even the localization on stars of elements sharing a common vertex is not robust. To this end we consider the unit square $\Omega = [0,1]^2$ and subdivide it into $4 N^2$ squares with sides of length $\frac{1}{2 N}$. Each square is further subdivided into two isosceles right-angled triangles by drawing its diagonal from the top-left corner to the bottom-right corner. This defines the triangulation $\tri$. The diffusion $a$ has a checkerboard pattern with the value $a = \frac{1}{N^2}$ on the black squares and the value $a = 1$ on the white squares with the outmost top-left square being black, see the left part of Figure \ref{counter_ex_m}. The partition of $\Omega$ into the small squares is sub-ordinate to a macro-partition consisting of $N^2$ macro-squares with sides of length $\frac{1}{N}$. On each of these macro-squares the diffusion is as in the right part of Figure \ref{counter_ex_m}. Obviously the right part of Figure \ref{counter_ex_m} is similar to Figure \ref{counter_ex} with two additional right-angled isosceles triangles adjacent to the long sides of triangles $K_1$ and $K_4$. Moreover, each macro-square is a translated and rescaled version of the square $[-1,1]^2$.

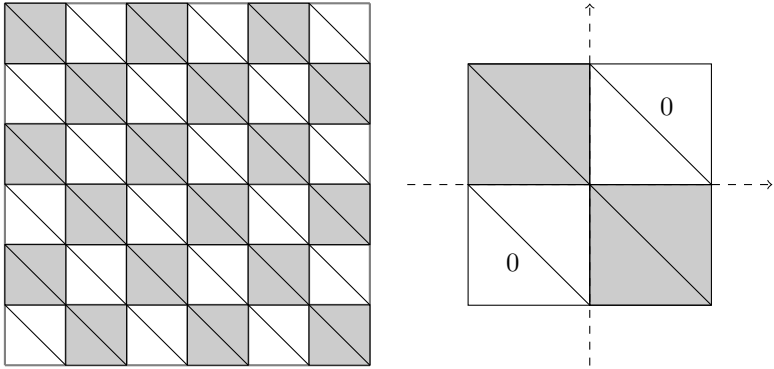
\begin{figure}[htb]
  \begin{center}
\begin{tikzpicture}[scale=0.8](-3.1,-3.1)(3.1,3.1)
\draw [help lines, thick] (-3,-3) grid (3,3);
\foreach \y in {2, 0, -2} {\foreach \x in {-3, -1, 1} \draw [fill=zzttqq] (\x,\y) rectangle (\x+1,\y+1);};
\foreach \y in {1, -1, -3} {\foreach \x in {-2, 0, 2} \draw [fill=zzttqq] (\x,\y) rectangle (\x+1,\y+1);};
\foreach \y in {-2, -1, 0, 1, 2, 3} \draw (-3,\y) -- (\y,-3);
\foreach \x in {-2, -1, 0, 1, 2} \draw (\x,3) -- (3,\x);
\end{tikzpicture}
\quad
\begin{tikzpicture}[scale=0.8](-3.1,-3.1)(3.1,3.1)
 \draw[fill=zzttqq]  (-2,0)--(0,0)--(0,2)--(-2,2);
\draw[fill=zzttqq]   (2,0)--(0,0)--(0,-2)--(2,-2);
\draw (0,0)--(-2,0)--(-2,-2)--(0,-2);
\draw (0,0)--(2,0)--(2,2)--(0,2);
 \draw (0,-2)--(-2,0) --(2,0)--(0,2)--(0,-2)--(2,-2)--(2,0);
  \draw (-2,0)--(-2,2)--(0,2);
  \draw (-2,2)--(2,-2);
\draw[->, dashed] (0,-3)--(0,3);
\draw[->,dashed] (-3,0)--(3,0);
\node[above right] at (1,1) {$0$};
\node[below left] at (-1,-1) {$0$};
\end{tikzpicture}
\caption{Left: triangulation $\tri$ corresponding to $N = 3$. Right: re-scaled macro-square. Gray areas: $a=\frac{1}{N^2}$. White areas: $a=1$. $u_{N,i,j} = 0$ on triangles marked $0$.}\label{counter_ex_m}
 \end{center}
 \end{figure} 

The function $u_\veps$ of the previous sub-section vanishes on the boundary of the domain depicted in Figure \ref{counter_ex}. We may therefore extend it by zero to the square $[-1,1]^2$. This extension is again denoted by $u_\veps$. For $1 \le i,j \le N$ we set
\begin{equation*}
u_{N,i,j}(\cdot) = u_{\frac{1}{N}} \left( 2N \Bigl( \cdot - \bigl( \frac{2i-1}{2N},\frac{2j-1}{2N} \bigr) \Bigr) \right).
\end{equation*}
Since $\tnorm{\cdot}$ is a pure diffusion-norm and the diffusion is piecewise constant on $\tri$, we obtain from \eqref{norm-u_eps} for all $1 \le i,j \le N$ and all $N$
\begin{equation}\label{norm-u_eps-rescaled}
\tnorm{u_{N,i,j}}^2_{(\frac{2i-1}{2N},\frac{2j-1}{2N})+[-\frac{1}{2N},\frac{1}{2N}]^2} = \tnorm{u_{\frac{1}{N}}}^2_{[-1,1]^2} \gtrsim 1.
\end{equation}
We now set
\begin{equation*}
U_N = \sum_{i=1}^N \sum_{j=1}^N \frac{1}{N} u_{N,i,j}.
\end{equation*}

Up to rotation, the patches $\omega_z$ associated with the interior vertices $z \in \verts_\Omega$ of $\tri$ consist of three types. The first type is depicted in the left part of Figure \ref{counter_ex_p}. It is as in Figure \ref{counter_ex} and corresponds to vertices of the form $z = (\frac{2i-1}{2N},\frac{2j-1}{2N})$ in the interior of the macro-squares. The second type is depicted in the middle part of Figure \ref{counter_ex_p} and corresponds to vertices of the form $z = (\frac{2i-1}{2N},\frac{j}{N})$ or $z = (\frac{i}{N},\frac{2j-1}{2N})$, i.e. the mid-points of the boundary edges of the macro-squares. The third type is depicted in the right part of Figure \ref{counter_ex_p} and corresponds to vertices of the form $z = (\frac{i}{N},\frac{j}{N})$, i.e. the corners of the macro-squares. On all these patches, the function $U_N$ is anti-symmetric with respect to reflections at the vertex $z$. Hence, the Ritz projection of $U_N$ onto $S^{1, 0}(\tri) \cap H^1_0(\Omega)$ vanishes. Combined with \eqref{norm-u_eps-rescaled} this implies for all $N$
\begin{equation*}
\inf_{v_\tri \in S^{1, 0}(\tri) \cap H^1_0(\Omega)} \tnorm{U_N - v_\tri}^2 = \tnorm{U_N}^2 \gtrsim 1.
\end{equation*}
We claim that
\begin{equation}\label{local-Ritz-to-0}
\sum_{z \in \verts_\Omega} \inf_{v_z \in S^{1, 0}(\tri)|_{\omega_z} \cap H^1_0(\Omega)} \tnorm{U_N - v_z}^2 \lesssim \frac{1}{N}.
\end{equation}
This establishes the announced non-robustness of the localization on stars when the diffusion does not satisfy the quasi-monotonicity property \ref{D:quasi-mon}.

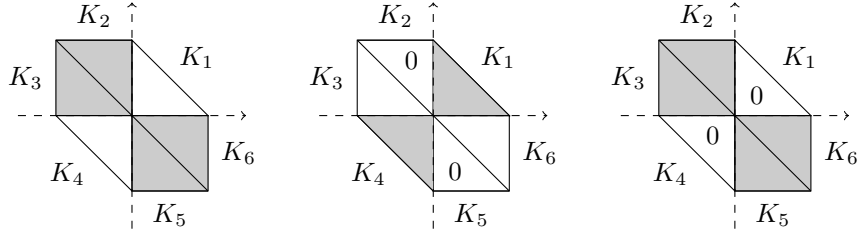
\begin{figure}[htb]
  \begin{center}
\begin{tikzpicture}[scale=0.5](-3.1,-3.1)(3.1,3.1)
 \draw[fill=zzttqq]  (-2,0)--(0,0)--(0,2)--(-2,2);
\draw[fill=zzttqq]   (2,0)--(0,0)--(0,-2)--(2,-2);
 \draw (0,-2)--(-2,0) --(2,0)--(0,2)--(0,-2)--(2,-2)--(2,0);
  \draw (-2,0)--(-2,2)--(0,2);
  \draw (-2,2)--(2,-2);
\draw[->, dashed] (0,-3)--(0,3);
\draw[->,dashed] (-3,0)--(3,0);
\node [above right] at (1,1) {$K_1$};
\node [below left] at (-1,-1){$K_4$};
\node [above] at(-1,2.1){$K_2$};
\node[left] at (-2.1,1){$K_3$};
\node[below]at (1,-2.1){$K_5$};
\node[right]at(2.1,-1){$K_6$};
\end{tikzpicture}
\quad
\begin{tikzpicture}[scale=0.5](-3.1,-3.1)(3.1,3.1)
 \draw  (-2,0)--(0,0)--(0,2)--(-2,2);
\draw   (2,0)--(0,0)--(0,-2)--(2,-2);
 \draw (0,-2)--(-2,0) --(2,0)--(0,2)--(0,-2)--(2,-2)--(2,0);
  \draw (-2,0)--(-2,2)--(0,2);
  \draw (-2,2)--(2,-2);
  \draw[fill=zzttqq] (0,0)--(2,0)--(0,2);
  \draw[fill=zzttqq] (0,0)--(-2,0)--(0,-2);
\draw[->, dashed] (0,-3)--(0,3);
\draw[->,dashed] (-3,0)--(3,0);
\node[above right] at (-1,1) {$0$};
\node[below left] at (1,-1) {$0$};
\node [above right] at (1,1) {$K_1$};
\node [below left] at (-1,-1){$K_4$};
\node [above] at(-1,2.1){$K_2$};
\node[left] at (-2.1,1){$K_3$};
\node[below]at (1,-2.1){$K_5$};
\node[right]at(2.1,-1){$K_6$};
\end{tikzpicture}
\quad
\begin{tikzpicture}[scale=0.5](-3.1,-3.1)(3.1,3.1)
 \draw[fill=zzttqq]  (-2,0)--(0,0)--(0,2)--(-2,2);
\draw[fill=zzttqq]   (2,0)--(0,0)--(0,-2)--(2,-2);
 \draw (0,-2)--(-2,0) --(2,0)--(0,2)--(0,-2)--(2,-2)--(2,0);
  \draw (-2,0)--(-2,2)--(0,2);
  \draw (-2,2)--(2,-2);
\draw[->, dashed] (0,-3)--(0,3);
\draw[->,dashed] (-3,0)--(3,0);
\node[below left] at (1,1) {$0$};
\node[above right] at (-1,-1) {$0$};
\node [above right] at (1,1) {$K_1$};
\node [below left] at (-1,-1){$K_4$};
\node [above] at(-1,2.1){$K_2$};
\node[left] at (-2.1,1){$K_3$};
\node[below]at (1,-2.1){$K_5$};
\node[right]at(2.1,-1){$K_6$};
\end{tikzpicture}
\caption{Types of patches $\omega_z$ associated with interior vertices $z \in \verts_\Omega$ of $\tri$ (up to rotation). Left: vertex $z = (\frac{2i-1}{2N},\frac{2j-1}{2N})$ in the interior of a macro-square. Middle: vertex $z = (\frac{2i-1}{2N},\frac{j}{N})$ or $z = (\frac{i}{N},\frac{2j-1}{2N})$ in the middle of a boundary edge of a macro-square. Right: vertex $z = (\frac{i}{N},\frac{j}{N})$ at a corner of a macro-square. Gray areas: $a=\frac{1}{N^2}$, white areas $a=1$. $U_N = 0$ on triangles marked $0$.}\label{counter_ex_p}
 \end{center}
 \end{figure}

To prove \eqref{local-Ritz-to-0} we first consider a vertex $z = (\frac{2i-1}{2N},\frac{2j-1}{2N})$ in the interior of a macro-square as depicted in the left part of Figure \ref{counter_ex_p}. From \eqref{Ritz-pair-->0} and \eqref{u_eps-to-0} we know that $U_N$ is well approximated by the function $\frac{1}{N} \phi_z$ on triangle $K_1$, by the function $-\frac{1}{N} \phi_z$ on the triangle $K_4$ and the function $0$ on the remaining triangles where $\phi_z$ is the first order nodal shape function associated with the vertex $z$. Unfortunately, this function is discontinuous and thus not contained in $S^{1,0}(\tri)|_{\omega_z}$. Yet, since $\tnorm{\cdot}$ is a pure diffusion norm with piecewise constant diffusion, we may add element-wise a constant without changing the norm. We therefore consider the continuous piecewise affine function $v_z$ that takes the value $-\frac{1}{N}$ at the vertices of $K_1$ different from $z$, the value $\frac{1}{N}$ at the vertices of $K_4$ different from $z$, and the value $0$ at all other vertices in $\omega_z$. From \eqref{Ritz-pair-->0} we conclude
\begin{equation*}
\tnorm{U_N - v_z}^2_{K_1 \cup K_4} = \tnorm{U_N - \frac{1}{N} \phi_z}^2_{K_1} + \tnorm{U_N + \frac{1}{N} \phi_z}^2_{K_4} \lesssim \frac{1}{N^3},
\end{equation*}
whereas \eqref{u_eps-to-0} implies
\begin{equation*}
\tnorm{U_N - v_z}^2_{K_2 \cup K_3 \cup K_5 \cup K_6} \le 2 \tnorm{U_N}^2_{K_2 \cup K_3 \cup K_5 \cup K_6} + 2 \tnorm{v_z}^2_{K_2 \cup K_3 \cup K_5 \cup K_6} \lesssim \frac{1}{N^3}.
\end{equation*}
Next, we consider a vertex $z = (\frac{2i-1}{2N},\frac{j}{N})$ or $z = (\frac{i}{N},\frac{2j-1}{2N})$ in the middle of a boundary edge of a macro-square as depicted as in the middle of Figure \ref{counter_ex_p}. Now, we consider the function $v_z$ which takes the value $\frac{1}{N}$ at the common vertex of triangles $K_3$ and $K_4$ different from $z$, the value $-\frac{1}{N}$ at the common vertex of triangles $K_1$ and $K_6$ different from $z$, and the value $0$ at all other vertices in $\omega_z$. Taking into account that $U_N$ and $v_z$ vanish on the triangles $K_2$ and $K_5$, we obtain with the same arguments as in the previous case
\begin{equation*}
\tnorm{U_N - v_z}^2_{\omega_z} = \tnorm{U_N - v_z}^2_{K_1 \cup K_3 \cup K_4 \cup K_6} \lesssim \frac{1}{N^3}.
\end{equation*}
Finally, we consider a vertex $z = (\frac{i}{N},\frac{j}{N})$ at a corner of a macro-square as depicted in the right part of Figure \ref{counter_ex_p}. We now set $v_z = 0$. Since $U_N$ vanishes on the triangles $K_1$ and $K_4$, we obtain from \eqref{u_eps-to-0}
\begin{equation*}
\tnorm{U_N - v_z}^2_{\omega_z} = \tnorm{U_N}^2_{K_2 \cup K_3 \cup K_5 \cup K_6} \lesssim \frac{1}{N^3}.
\end{equation*}
Since $\verts_\Omega$ consists of $\approx N^2$ vertices, these estimates establish \eqref{local-Ritz-to-0}.

\section*{Acknowledgements}
The first author was founded by the DFG under grant VE 397/1-1 AOBJ:612415 during her stay with the second author's group.

\bibliographystyle{siam}

\begin{thebibliography}{10}

\bibitem{Bernardi.Verfuerth:00}
{\sc C.~Bernardi and R.~Verf{\"u}rth}, {\em Adaptive finite element methods for
  elliptic equations with non-smooth coefficients}, Numer. Math., 85 (2000),
  pp.~579--608.

\bibitem{Binev.DeVore:04}
{\sc P.~Binev and R.~DeVore}, {\em Fast computation in adaptive tree
  approximation}, Numer. Math., 97 (2004), pp.~193--217.

\bibitem{Ciarlet:78}
{\sc P.~G. Ciarlet}, {\em The finite element method for elliptic problems},
  vol.~4 of Studies in Mathematics and its Applications, North--Holland,
  Amsterdam, 1978.

\bibitem{Dryja.Sarkis.Widlund:96}
{\sc M.~Dryja, M.~V. Sarkis, and O.~B. Widlund}, {\em Multilevel schwarz
  methods for elliptic problems with discontinuous coefficients in three
  dimensions}, Numer. Math., 72 (1996), pp.~313--348.

\bibitem{Pechstein.Scheichl:13}
{\sc C.~Pechstein and R.~Scheichl}, {\em Weighted {P}oincar\'e inequalities},
  IMA J. Numer. Anal., 33 (2013), pp.~652--686.

\bibitem{TVV:-1}
{\sc F.~Tantardini, A.~Veeser, and R.~Verf{\"u}rth}, {\em
  {$H^{-1}$}-approximation with piecewise polynomials}.
\newblock Tech.~Rep.~Ruhr-Universit{\"a}t Bochum, October 2016.

\bibitem{TVV:15}
\leavevmode\vrule height 2pt depth -1.6pt width 23pt, {\em Robust localization
  of the best error with finite elements in the reaction-diffusion norm},
  Constr. Approx., 42 (2015), pp.~313--347.

\bibitem{Veeser:16}
{\sc A.~Veeser}, {\em Approximating gradients with continuous piecewise
  polynomial functions}, Found. Comput. Math., 16 (2016), pp.~723--750.

\bibitem{Veeser.Verfuerth:12}
{\sc A.~Veeser and R.~Verf\"urth}, {\em Poincar\'e constants for finite element
  stars}, IMA Journal of Numerical Analysis, 32 (2012), pp.~30--47.

\bibitem{Zanotti:13}
{\sc P.~Zanotti}, {\em Locking and coupling in piecewise polynomial
  approximation}, Master's thesis, Universit{\`a} degli Studi di Milano, 2013.

\end{thebibliography}

\def\cprime{$'$} \def\cprime{$'$}

\end{document}